\definecolor{webgreen}{rgb}{0,.5,0}
\definecolor{webbrown}{rgb}{.6,0,0}
\newcommand{\pl}{\parallel}  % used
\newcommand{\md}{\leftrightline}  % used
\theoremstyle{plain}
\numberwithin{equation}{section}
\newtheorem{thm}{Theorem}[section]
\newtheorem{theorem}[thm]{Theorem}
 \newcommand{\seqnum}[1]{\href{http://oeis.org/#1}{\underline{#1}}}
\newcommand{\eqn}[1]{(\ref{#1})}
\newcommand{\beql}[1]{\begin{equation}\label{#1}}
\newcommand{\eeq}{\end{equation}}
\begin{document}

%\fancyhead{}
%\renewcommand{\headrulewidth}{0pt}
%\fancyfoot{}
%\fancyfoot[LE,RO]{\medskip \thepage}
%\fancyfoot[LO]{\medskip MONTH YEAR}
%\fancyfoot[RE]{\medskip VOLUME , NUMBER }

\setcounter{page}{1}

%%%%%%%%%%%%%%%%%%%%%%%%%%%%%%%%%%%%%%
%                             Title, authors
%%%%%%%%%%%%%%%%%%%%%%%%%%%%%%%%%%%%%%

\begin{center}

{\large\bf ``Choix de Bruxelles'': A New Operation on Positive Integers } \\
\vspace*{+.2in}

\'{E}ric Angelini \\
32 dr\`{e}ve de Linkebeek 1640 Rhode-Saint-Genese, BELGIUM \\
Email:  \href{mailto:eangelini@everlastingprod.be}{\tt eangelini@everlastingprod.be}

\vspace*{+.1in}

Lars Blomberg \\
\"{A}renprisv\"{a}gen 111,
SE-58564 Linghem, SWEDEN\\
Email:  \href{mailto:larsl.blomberg@comhem.se}{\tt larsl.blomberg@comhem.se}

\vspace*{+.1in}

Charlie Neder \\
1122 Clement St.,
 Watertown, WI 53094, USA\\
 Email:  \href{mailto:Charlie.Neder@mbu.edu}{\tt Charlie.Neder@mbu.edu}

\vspace*{+.1in}

R\'{e}my Sigrist \\
3 rue de la Somme,
 67000 Strasbourg, FRANCE \\
 Email:  \href{mailto:remyetc9@gmail.com}{\tt remyetc9@gmail.com}
  
\vspace*{+.1in}
\
N. J. A. Sloane\footnote{To whom correspondence should be addressed.} \\
The OEIS Foundation Inc., 
11 South Adelaide Ave.
Highland Park, NJ 08904, USA \\
Email:  \href{mailto:njasloane@gmail.com}{\tt njasloane@gmail.com}
\vspace*{+.1in}

\begin{abstract}
The ``Choix de Bruxelles'' operation replaces a positive integer $n$ by any of
the numbers that can be obtained by halving or doubling a substring of the decimal representation of $n$.
For example, $16$ can become any of $16$, $26$, $13$,  $112$, $8$, or $32$.
We investigate the properties of this interesting operation and its iterates.
\end{abstract}
\end{center}

%\maketitle

%%%%%%%%%%%%%%%%%%%%%%%%%%%%%%%%%%%%%%
% Section: Introduction
%%%%%%%%%%%%%%%%%%%%%%%%%%%%%%%%%%%%%%
\section{Introduction}\label{Sec1}
Let $n$ be a positive integer with
decimal expansion $d_1 d_2 d_3 \ldots d_k$.
The ``Choix de Bruxelles'' operation replaces  $n$ by any of
the numbers that can be formed by taking a number $s$
represented by a substring $d_p d_{p+1} \ldots d_q$, 
with $1 \le p \le q \le k$,  where the initial digit  $d_p$ 
is not zero,  and replacing the substring \emph{in situ}
by the decimal expansion of $2s$ or, if $s$ is even,
by the decimal expansion of $s/2$.
One may also leave $n$ unchanged (corresponding to choosing the empty substring). 

Since the definition may be confusing at first glance, we give a detailed example.
Suppose $n = 20218$. By choosing substrings of length $1$, we can obtain any of
$$
10218, 40218, 20118, 20418, 20228, 20214, 
\text{and~} 202116 (!).
$$
(For the last of these, we replaced $8$ by $16$ \emph{in situ}.) 
Using substrings of length $2$ we can obtain
$$
10218 ~ \& ~ 40218 \text{~again}, 20428, 2029 (!), 20236,
$$
and using substrings of lengths $0, 3, 4, 5$ we obtain
$$
20218; 10118, 40418, 20109, 20436; 40428; 10109 \text{~and~} 40436.
$$
It is the possibility of increasing or decreasing the number of
digits by one that makes this operation interesting. Changing $16$ to $112$,
for example, increases the number by a factor of $7$.
In fact, as we see in Theorem~\ref{th3}, $n$ may
change by a factor ranging from $\frac{1}{10}$ to $10$.
Note that the operation is symmetric: if $n$ can be changed to $m$, then $m$
can be changed to $n$.

The name ``Choix de Bruxelles'' arose from a combination of several ideas:
this a kind of mathematical game, like ``sprouts'' \cite{WW},
sprouts are ``choux de Bruxelles'',  the operation was proposed by the first author,
who lives in Brussels, and involves making choices.

Our goal is to investigate the properties of this operation, and to study what happens
when it is iterated.

Section~\ref{Sec2} studies the range of numbers that can be reached by applying
the operation once.  Theorems~\ref{th1} and \ref{th2} 
 give the largest and smallest numbers that can be obtained from $n$, and Theorem~\ref{th3}  
 describes their range.
 
 Section~\ref{Sec3} shows that by repeatedly applying the operation, any
 number not ending with $0$ or $5$ can be transformed into
 any other such number, and any number ending in $0$ or $5$ can be be transformed into any other
 number of that form, but the two classes always remain separate. The final section studies
 how many steps are need to reach $n$ from $1$, assuming
 that $n$ does not end in $0$ or $5$.

One could also consider this operator in other bases. However, the binary version
(see sequence  \seqnum{A323465}\footnote{Six-digit numbers prefixed by A refer to entries in the On-Line Encyclopedia of Integer Sequences.}
 in \cite{OEIS})
is not very interesting---the operation preserves the binary weight---so we have not pursued this.

Notation.  For clarity we will sometimes use the symbol $\pl$ to
indicate that the digits of two numbers are
to be concatenated.  For example, if $x=7$ and $y=8$, $(2x) \pl y$ 
indicates the number $148$.

% Table 1
\begin{table}[htb]
\caption{Numbers arising when ``Choix de Bruxelles'' is applied to the numbers 1 to 16.}\label{Tab1}
$$
\begin{array}{|r|c||r|c|}
\hline
n & \text{goes~to} & n & \text{goes~to} \\
\hline
1 & 1, 2 &     9 & 9, 18 \\
2 & 1, 2, 4 & 10 & 5, 10, 20 \\
3 & 3, 6    & 11 & 11, 12, 21, 22 \\
4 & 2, 4, 8 & 12 & 6, 11, 12, 14, 22, 24 \\
5 & 5, 10   & 13 & 13, 16, 23, 26 \\
6 & 3, 6, 12& 14 & 7, 12, 14, 18, 24, 28 \\
7 & 7, 14   & 15 & 15, 25, 30, 110 \\
8 & 4, 8, 16& 16 & 8, 13, 16, 26, 32, 112 \\
\hline
\end{array}
$$
\end{table}

%%%%%%%%%%%%%%%%%%%%%%%%%%%%%%%%%%%%%%
% Section 2: A single step
%%%%%%%%%%%%%%%%%%%%%%%%%%%%%%%%%%%%%%
\section{Numbers that can be reached in one step.}\label{Sec2}

Table \ref{Tab1} shows the numbers that can be reached from $n$ in one step, for $1 \le n \le 16$.
The rows of this table form  
sequence \seqnum{A323460} in the OEIS \cite{OEIS}. 

The smallest ($M_0(n)$, \seqnum{A323462}) and largest ($M_1(n)$, \seqnum{A323288}) numbers in each row are as follows:
\beql{EqMM}
   \begin{array}{rrrrrrrrrrrrrrrrrrrr}
   n: & 1 & 2 & 3 & 4 & 5 & 6 & 7 & 8 & 9 & 10 & 11 & 12 & 13 & 14 & 15 & 16 &   \cdots \\
M_0(n): & 1 & 1 & 3 & 2 & 5 & 3 & 7 & 4 & 9 & 5 & 11 & 6 & 13 & 7 & 15 & 8 &  \cdots \\
M_1(n): & 2 & 4 & 6 & 8 & 10 & 12 & 14 & 16 & 18 & 20 & 22 & 24 & 26 & 28 & 110 & 112 &  \cdots
\end{array}
\eeq

\begin{theorem}\label{th1}
The largest number $M_1(n)$ that can be obtained from
\beql{Eqddd}
 n ~=~ d_1 d_2 d_3 \ldots d_k
 \eeq
 by the  Choix de Bruxelles operation is either
$2n$,  if all $d_i <5$, and otherwise is obtained by doubling the 
substring $d_p \ldots d_k$ where $d_p$ is the right-most digit $\ge 5$.
\end{theorem}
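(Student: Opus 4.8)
The plan is to first pin down how a single doubling affects the number of digits, and then to split on whether $n$ has a digit $\ge 5$. A single application of the operation either leaves $n$ unchanged, halves a substring, or doubles a substring; halving a substring of value $s$ replaces it by $s/2 < s$ and never increases the digit count, so it strictly decreases $n$ and is irrelevant for $M_1(n)$. For a doubling of the substring $s = [d_p \ldots d_q]$ in situ, we have $10^{q-p} \le s < 10^{q-p+1}$, so $2s$ has $q-p+1$ digits when $d_p \le 4$, and $q-p+2$ digits, with leading digit $1$ (since $10^{q-p+1} \le 2s < 2\cdot 10^{q-p+1}$), when $d_p \ge 5$. Thus a doubling increases the total digit count by one precisely when the first digit of the chosen substring is $\ge 5$, and otherwise preserves the digit count.

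For the bookkeeping I would fix a substring $d_i \ldots d_j$ and write $n = A\cdot 10^{k-i+1} + B\cdot 10^{k-j} + C$ with $A = [d_1 \ldots d_{i-1}]$, $B = [d_i \ldots d_j]$, $C = [d_{j+1} \ldots d_k]$ (and $A = 0$ if $i=1$, $C = 0$ if $j=k$). Doubling this substring then produces $n + B\cdot 10^{k-j}$ if $d_i \le 4$, and $n + 9A\cdot 10^{k-i+1} + B\cdot 10^{k-j}$ if $d_i \ge 5$; in the latter case, specialising to $j=k$ (so $C=0$ and $B = n - A\cdot 10^{k-i+1}$) collapses this to $2n + 8A\cdot 10^{k-i+1}$.

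If all $d_i \le 4$, no doubling changes the digit count, so every number reachable in one step has at most $k$ digits, and a doubling of $d_i \ldots d_j$ adds $B\cdot 10^{k-j} = [d_i \ldots d_j]\cdot 10^{k-j} \le [d_1 \ldots d_j]\cdot 10^{k-j} \le [d_1 \ldots d_k] = n$. Since $d_1 \le 4$, doubling the entire string is legal and yields exactly $2n$, so $M_1(n) = 2n$, the first case of the theorem. If instead some $d_i \ge 5$, let $p$ be the largest index with $d_p \ge 5$. Doubling any substring whose first digit is $\ge 5$ yields a $(k+1)$-digit number, which exceeds every reachable number of at most $k$ digits, so $M_1(n)$ must be realised by such a doubling, of some $d_i \ldots d_j$ with $d_i \ge 5$. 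For fixed $i$ the increment $9A\cdot 10^{k-i+1} + [d_i \ldots d_j]\cdot 10^{k-j}$ over $n$ is non-decreasing in $j$, because raising $j$ to $j+1$ changes it by $+d_{j+1}\cdot 10^{k-j-1} \ge 0$; so the optimal $j$ is $k$, with value $2n + 8[d_1 \ldots d_{i-1}]\cdot 10^{k-i+1}$. This value is non-decreasing in $i$, since $10^{k-i+1}[d_1 \ldots d_{i-1}]$ changes by $+d_i\cdot 10^{k-i} \ge 0$ when $i$ increases by one; hence among admissible indices (those with $d_i \ge 5$) the maximum occurs at $i = p$, and $M_1(n)$ is obtained by doubling $d_p \ldots d_k$, with closed form $2n + 8[d_1 \ldots d_{p-1}]\cdot 10^{k-p+1}$.

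The only real obstacle is getting the powers of ten right when a substring is replaced in situ by a block of a different length, i.e.\ justifying the increment $9A\cdot 10^{k-i+1} + B\cdot 10^{k-j}$ in the digit-increasing case; once that identity is established the two monotonicity steps (first in $j$, then in $i$) are immediate, and the conceptual point is simply that gaining a digit dominates every other option, which is what forces both $j = k$ and $i = p$.
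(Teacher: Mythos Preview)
Your proof is correct and follows the same overall plan as the paper's: dispose of halving, separate the cases by whether a digit $\ge 5$ exists, and in the second case argue that (a) the substring should end at $d_k$ and (b) should begin at the right-most digit $\ge 5$. The difference is one of style: the paper argues (a) and (b) informally (noting that the doubled block begins with a $1$, so placing it further right is better, and asserting without detail that extending to $d_k$ is optimal), whereas you write out the exact increment $9A\cdot 10^{k-i+1}+B\cdot 10^{k-j}$ and verify both monotonicities by a one-line telescoping computation. Your version is thus a bit more rigorous, and it also yields the closed form $M_1(n)=2n+8[d_1\ldots d_{p-1}]\cdot 10^{k-p+1}$, which the paper does not state.
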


\begin{proof}
If all $d_i < 5$, the number of digits
will not change during the operation, and so the best we can do is to double every digit, getting
$M_1(n)=2n$.

If there are digits $d_i \ge 5$, then doubling any substring
beginning with such a $d_i$ will increase the number of digits by one.
The doubled substring will 
begin with $1$ instead of $d_i$, and so should be as far to the right as possible.
(E.g., if we double a substring starting at $d_3 = 7$, say, we 
get a number $d_1 d_2 1 \ldots$,
whereas if we double a substring starting at $d_5=6$ we get $d_1 d_2 d_3 d_4 1 \ldots$,
which is a larger number since $d_3 \ge 5$.)

Finally, if $d_p$ is the right-most digit $\ge 5$, let $t_q$  (where $p \le q \le k$)
denote the result of doubling 
just the substring  from $d_p$ through $d_q$. Then 
$$
t_q ~=~ d_1 \ldots d_{p-1}  \pl 2(d_p \ldots d_q) \pl d_{q+1} \ldots d_k,
$$
and this number is maximized by taking $q=k$.
\end{proof}

\begin{theorem}\label{th2}
The smallest number $M_0(n)$ that can be obtained from \eqn{Eqddd}
 by the  Choix de Bruxelles operation is as follows:
 If there is a substring $d_r ... d_s$ of \eqn{Eqddd} which starts with $d_r = 1$ and ends with an even digit $d_s$,
take the string of that form which starts with the left-most $1$ and ends with the right-most even digit, and halve it. Otherwise, if there is any even digit, take the substring from $d_1$ to the right-most even digit and halve it. 
In the remaining case, all $d_i$ are odd, and $M_0(n) = n$.
\end{theorem}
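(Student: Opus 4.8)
The plan is to analyze how the Choix de Bruxelles operation can decrease $n$, and to show the three cases in the statement are exhaustive and each yields the optimum. The only way to decrease a number is to \emph{halve} an even substring (doubling never helps, and leaving $n$ fixed is only optimal when nothing else can do better). So I would first observe that $M_0(n)$ is obtained by choosing some substring $d_p \ldots d_q$ with $d_p \neq 0$, $d_p \ldots d_q$ even, and replacing it in situ by its half; I want to show the greedy choice described is best.

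First I would treat the digit-count: halving a substring $s$ decreases the number of digits exactly when $s$ has a leading digit equal to $1$ (so $s/2$ has one fewer digit); in every other case the digit count is unchanged. Since a number with fewer digits is automatically smaller, the first priority is to find a substring whose half has fewer digits, i.e.\ a substring starting with $d_r = 1$ and ending with an even digit $d_s$ (so that it represents an even number). Among all such substrings, I claim halving $d_r \ldots d_s$ is best when $d_r$ is the \emph{left-most} available $1$ and $d_s$ is the \emph{right-most} even digit: pushing the left endpoint as far left as possible removes a digit at the earliest (most significant) position, which is the dominant effect; and once the left endpoint is fixed, extending the right endpoint to the right-most even digit replaces a longer block by its half, and one checks $2\cdot(\text{shorter half block})\pl(\text{tail digits})$ against $(\text{longer half block})$ to see the longer block wins. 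I would make this precise with the $\pl$ notation exactly as in the proof of Theorem~\ref{th1}: write $n = A \pl s \pl B$ with $s = d_r\ldots d_s$ and compare the two candidate results digit-position by digit-position, using that losing a digit at position $r$ beats anything happening further right.

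Second, if no substring starts with $1$ and ends with an even digit, then halving never reduces the digit count, so the digit count is fixed at $k$ and we simply want the lexicographically smallest result. To make $d_1$ as small as possible we should halve a substring starting at $d_1$ (provided such a substring is even, i.e.\ provided some even digit exists). Halving $d_1 \ldots d_j$ replaces the leading block by its half; extending $j$ to the right-most even digit is again optimal by the same block-comparison argument, with the extra point that we cannot start later than $d_1$ without leaving $d_1$ unchanged, which is weakly worse. (One must also check that $d_1 \ge 2$ whenever we are in this case and there is an even digit but no usable leading-$1$ substring — but if $d_1 = 1$ and some even digit $d_s$ exists then the substring $d_1 \ldots d_s$ is of the first type, contradiction, so indeed $d_1 \ge 2$ here and starting at $d_1$ genuinely decreases it.) Finally, if every digit is odd, no substring is even, so halving is never legal, doubling only increases, and $M_0(n) = n$.

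The main obstacle I expect is the block comparison for the right endpoint in the first (digit-reducing) case: after we have committed to deleting a digit at position $r$, we need that among all valid right endpoints $s$, the \emph{right-most} even $d_s$ gives the smallest result, and this requires comparing $2 \cdot \overline{d_r \ldots d_s}$ (which has $s - r$ digits, one fewer than the block) with the various shorter alternatives, all placed inside the common surrounding digits. The subtlety is that a shorter halved block could conceivably produce smaller digits in the top positions; I would handle this by noting that after removing the position-$r$ digit, the remaining high-order digits of the result are determined by $\lfloor \overline{d_r \ldots d_s}/2 \rfloor$, and extending $s$ rightward only appends more (less significant) digits to this quotient while the leading digits of the quotient are monotone in the obvious way — so, carefully, the longest block wins. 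This is the one place where a genuine (though short) case check on carries is unavoidable; everything else follows the template already set by Theorem~\ref{th1}.
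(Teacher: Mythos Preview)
The paper does not actually prove Theorem~\ref{th2}: it says only that ``the proof uses similar arguments to the proof of Theorem~\ref{th1}, and is omitted.'' Your outline follows that template, which is the natural thing to do, and your treatment of Case~2 (no digit-reducing substring, but some even digit) and Case~3 (all digits odd) is fine.

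The genuine gap is in your Case~1 argument for the left endpoint, and it in fact uncovers an error in the theorem as stated. You claim that ``pushing the left endpoint as far left as possible removes a digit at the earliest (most significant) position, which is the dominant effect.'' But when $d_r=1$, the block $d_r\ldots d_s$ lies in $[10^{\,s-r},\,2\cdot 10^{\,s-r})$, so its half lies in $[5\cdot 10^{\,s-r-1},\,10^{\,s-r})$ and hence begins with a digit $\ge 5$. In the $(k-1)$-digit result, position $r$ therefore carries a digit $\ge 5$. If instead one uses a later $1$ at position $r'>r$, position $r$ of the result is the unchanged digit $d_r=1<5$, while positions $1,\ldots,r-1$ agree in both cases; so the later $1$ gives a strictly smaller result. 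Concretely, for $n=116$ the stated rule halves $116$ and returns $58$, but halving the substring $16$ gives $18<58$; for $n=1216$ one gets $128$ (from halving $16$), not $608$ (from halving $1216$). The correct prescription is to take the \emph{right-most} $1$ that has an even digit to its right, together with the right-most even digit. With that amendment, the right-endpoint comparison you flagged as ``the main obstacle'' goes through exactly as you sketch (the leading digits of the halved block are unchanged when the block is extended to the right), and the rest of your proof stands.
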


 The proof uses similar arguments to the proof of Theorem~\ref{th1}, and is omitted.
  
  We can use Theorems~\ref{th1} and \ref{th2} to get precise bounds on the range
  of numbers produced by the operation.
  
  \begin{theorem}\label{th3}
  The numbers $m$ obtained by applying the Choix de Bruxelles operation to $n$ lie in the range
  \beql{Eq10}
  \frac{n}{10}~<~ m ~<~ 10n,
  \eeq
  and there are values of $n$ for which $m$ is
  arbitrarily close to either bound.
  \end{theorem}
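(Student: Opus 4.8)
The plan is to deduce both inequalities in \eqn{Eq10} directly from Theorems~\ref{th1} and~\ref{th2}, which already identify the extreme values $M_1(n)$ and $M_0(n)$, and then to establish sharpness with one explicit family together with the symmetry of the operation.

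\textbf{Upper bound.} Since every $m$ reachable from $n$ satisfies $m\le M_1(n)$, it suffices to show $M_1(n)<10n$. If all $d_i<5$ then Theorem~\ref{th1} gives $M_1(n)=2n<10n$. Otherwise let $d_p$ be the right-most digit $\ge 5$, set $\ell=k-p+1$, and write
\[
n ~=~ A\cdot 10^{\ell}+s,\qquad s=d_p\,d_{p+1}\ldots d_k,
\]
where $A$ is the integer $d_1\ldots d_{p-1}$ (with $A=0$ when $p=1$) and $5\cdot 10^{\ell-1}\le s<10^{\ell}$. Then $2s$ has $\ell+1$ digits, so Theorem~\ref{th1} gives $M_1(n)=A\cdot 10^{\ell+1}+2s$, whereas $10n=A\cdot 10^{\ell+1}+10s$; since $2s<10s$ we get the strict inequality $M_1(n)<10n$.

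\textbf{Lower bound.} The plan here is to invoke the symmetry observed in Section~\ref{Sec1}: if $m$ is obtained from $n$, then $n$ is obtained from $m$, so the bound just proved yields $n<10m$, i.e.\ $m>n/10$. (A direct argument from Theorem~\ref{th2} is also available: if no digit is lost, the halved substring is $d_1\ldots d_s$ with $d_1\ge 2$, and a one-line place-value estimate gives $M_0(n)>n/4$; if the halved substring $u=d_r\ldots d_s$ loses a digit, writing $n=C\cdot 10^{k-r+1}+u\cdot 10^{k-s}+B$ with $0\le B<10^{k-s}$ gives $M_0(n)=C\cdot 10^{k-r}+(u/2)\,10^{k-s}+B$, whence $10M_0(n)-n=4u\cdot 10^{k-s}+9B>0$.)

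\textbf{Sharpness.} For $N\ge 1$ take $n=n_N=1\underbrace{0\cdots0}_{N}5=10^{N+1}+5$. Its last digit $5$ is the right-most digit $\ge 5$, so by Theorem~\ref{th1} doubling it gives $m_N:=M_1(n_N)=1\underbrace{0\cdots0}_{N}10=10^{N+2}+10$, and
\[
\frac{m_N}{n_N}~=~\frac{10^{N+2}+10}{10^{N+1}+5}~=~10\cdot\frac{10^{N+1}+1}{10^{N+1}+5}~\longrightarrow~10\qquad(N\to\infty),
\]
so $m$ can be made arbitrarily close to $10n$. Applying the operation to $m_N$ in the reverse direction—halving the trailing substring $10$ back to $5$—returns $n_N$, and $n_N/m_N\to 1/10$, so $m$ can also be made arbitrarily close to $n/10$. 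I do not expect a genuine obstacle here; the only point that needs care is the place-value bookkeeping in the digit-losing case of Theorem~\ref{th2}, and the symmetry remark is what lets us avoid redoing that analysis for the lower bound.
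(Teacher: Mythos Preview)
Your proof is correct. The upper-bound argument is essentially the paper's: both invoke Theorem~\ref{th1}, write $n=A\cdot 10^{\ell}+s$ with $s$ the tail beginning at the right-most digit $\ge 5$, and compare $M_1(n)=A\cdot 10^{\ell+1}+2s$ with $10n=A\cdot 10^{\ell+1}+10s$.

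Where you diverge is in the lower bound and the sharpness examples. The paper proves $M_0(n)>n/10$ directly from Theorem~\ref{th2} by a place-value computation in the digit-losing case; you instead use the symmetry of the operation (if $n\to m$ then $m\to n$) to deduce $m>n/10$ immediately from the upper bound. This is a genuine shortcut and avoids the case analysis. For sharpness at the upper end the paper takes $n=99\ldots9$ (giving $M_1(n)=10n-72$), while you take $n=10\ldots05$; both families work. For the lower end the paper's example is $n=10^t+10$ with $M_0(n)=10^{t-1}+5$, which is exactly your pair $(m_N,n_N)$ read in the other direction---so here your symmetry trick recovers the paper's example without a separate search.
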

  
  \begin{proof}
  For the upper bound, by Theorem~\ref{th1}  we can assume there is a digit of $n$ that is 
  $\ge 5$. Let $d$ be the right-most such digit, and write $n = A \pl d \pl B = A\,10^i+d\,10^{i-1}+B$.
  Let $2d=10+x$, $x \in \{0,2,4,6,8\}$. Then
  $$
  M_1(n) = A\,10^{i+1} + 10^i + x\,10^{i-1}+2B
  $$
  and $M_1(n)<10n$ follows immediately.
  Numbers of the form  $n=99 \ldots 9$ with $d=9$, $i=1$, $B=0$ have $M_1(n)=99\ldots918 = 10n-72$, 
  so $M_1(n)/n = 10-72/n$,  which comes arbitrarily close  to the bound.
  
  For the lower bound, from Theorem~\ref{th1} the only nontrivial case is when there is a substring
  $B = 1 \ldots e$, $e$ even, and then $n = A\,10^{i+j}+B\,10^j+C$,
  $M_0(n)=A\,10^{i+j-1}+\frac{B}{2} 10^j+C$, which implies
  $n<10\, M_0(n)$. Numbers of the form $n=10^t+10$, $t \ge 2$, with
  $M_0(n) = 10^{t-1}+5$, $M_0(n)/n = \frac{1}{10}(1+4/10^{t-1} + \cdots)$ come arbitrarily close to this bound.
  \end{proof}

%%%%%%%%%%%%%%%%%%%%%%%%%%%%%%%%%%%%%%
% Section 3 What can we reach?
%%%%%%%%%%%%%%%%%%%%%%%%%%%%%%%%%%%%%%
\section{Which numbers can be reached from 1?}\label{Sec3}

If we start with $1$ and repeatedly apply the Choix de Bruxelles operation, which numbers can we reach?
From\footnote{We write $\md$ rather than $\rightarrow$  in describing these transformations, since the operation is symmetrical.}
\beql{EqGet3}
1 \md 2 \md 4 \md 8 \md 16 \md 112 \md 56 \md 28 \md 14 \md 12 \md 6 \md 3
\eeq
we can reach $3$ in $11$ steps\footnote{Found by Lorenzo Angelini.},
as well as $2, 4, 8$ and $6$, and $ \cdots \md 28 \md 14 \md 7$ and
$ \cdots \md 28 \md  14 \md 18 \md 9$ give us all the numbers less than $10$ except $5$.
Further experimenting suggests that $5$ and $10$ may be impossible to reach. This is true,
as we now show.

Let us define an undirected graph $G$ with vertices labeled by the positive integers,
where vertices $n$ and $m$ are joined by an edge if Choix de Bruxelles takes $n$ to $m$.

\begin{theorem}
The graph $G$ has two connected components, one containing all numbers whose decimal expansion does not end in $0$ or $5$,  the other   containing all numbers which do end in $0$ or $5$.
\end{theorem}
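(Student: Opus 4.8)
The plan is to establish two facts: (1) the last digit of a number modulo the "divisible by 5" property is an invariant of the Choix de Bruxelles operation, so the two classes never mix; and (2) within each class, every number can be reached from every other. The first fact is easy. A Choix de Bruxelles move either alters an internal or initial substring (leaving the final digit $d_k$ untouched), or it alters a substring ending in $d_k$, replacing the trailing block by $2s$ or $s/2$. In the latter case the new last digit is the last digit of $2s$ or of $s/2$; doubling never changes whether a number is $\equiv 0 \pmod 5$ to whether it is $\not\equiv 0$, and halving an even $s$ likewise cannot turn a multiple of $10$ into a non-multiple or vice versa at the units position — more carefully, one checks that $n \equiv 0 \pmod 5$ iff the resulting number is $\equiv 0 \pmod 5$ by examining $2s \bmod 10$ and $(s/2)\bmod 10$ against $s \bmod 10$. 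So $G$ has at least these two components; I would isolate this as a short lemma.

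The substantive half is connectivity within each class. For the class of numbers not ending in $0$ or $5$, I would show every such $n$ is connected to the number $1$ (note $1$ itself does not end in $0$ or $5$). By symmetry of the operation it suffices to produce, from an arbitrary such $n$, a path down to $1$; the displayed chain \eqn{EqGet3} already connects $1$ to $3$, hence to $2,4,6,8,9,14,7,\dots$, so it is enough to reach some small number in this component. The natural strategy is a descent argument: given $n=d_1\ldots d_k$ with $k \ge 2$, I want a sequence of moves that strictly decreases $n$ (or decreases the digit-length) while staying in the class. If $d_k \in\{1,\dots,9\}\setminus\{0,5\}$, the idea is first to manipulate the last one or two digits into an even digit $\ge 2$ without a leading $1$-block issue, then halve a long prefix: e.g.\ repeatedly double the final digit when it is small to bring it above $5$ so that a subsequent halving of the whole string (now of length $k+1$, beginning with a digit we can control) yields a number of length $k$ but with smaller leading part — one can show a bounded number of such moves reduces either $k$ or the leading digit. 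One must handle the obstruction that halving requires an even leading digit of the chosen substring; this is arranged by first doubling digit $d_1$ if it is odd (possible since doubling is always allowed), or, when $d_1$ is large, by working with the substring $d_1 d_2$.

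For the class of numbers ending in $0$ or $5$, I would run the analogous descent toward $5$ (the minimal such number), or equivalently observe that $n$ ends in $0$ or $5$ iff $2n$ ends in $0$, and $10 \leftrightline 5 \leftrightline 10 \leftrightline 20 \leftrightline \dots$, so it suffices to connect every such $n$ to $10$. Here a clean reduction is: if $n$ ends in $5$, double the last digit to get a number ending in $10$, i.e.\ ending in $0$; so we may assume $n = d_1\ldots d_{k-1}0$. Then the leading digits $d_1\ldots d_{k-1}$ can be manipulated exactly as an arbitrary positive integer (with no last-digit constraint), driving them down via the same descent used above until $n$ becomes $10$ — the trailing $0$ is simply carried along and never interferes because we only ever touch substrings among $d_1\ldots d_{k-1}$, or substrings ending in the $0$, and halving ``$\ldots 20$'' to ``$\ldots 10$'' or similar keeps us in the class.

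The main obstacle is making the descent argument airtight: showing that from \emph{every} $n$ of length $k\ge 2$ in the given class one can, in finitely many moves, reach a strictly smaller number (in value, or in digit-count) of the same class, while carefully dodging the leading-zero prohibition and the ``$s$ must be even to halve'' requirement. I expect this to require a short case analysis on the first two and last two digits of $n$, using doublings of single digits as cheap preparatory moves to force parity or size conditions, and then one well-chosen halving of a long prefix. Once the descent is established, a straightforward induction on $n$ finishes both components, and combining with the invariance lemma gives exactly two components as claimed.
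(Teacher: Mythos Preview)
Your invariance lemma (the operation preserves whether $n \equiv 0 \pmod 5$) is correct and is exactly the paper's part (iii). Your treatment of the multiples-of-$5$ component---double a trailing $5$ to get a trailing $0$, then operate only on the prefix---also matches the paper's part (ii).

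Where you diverge is in the connectivity argument for numbers not ending in $0$ or $5$. You propose a descent by adjusting the last digit's parity and then halving a long initial substring (or the whole number), with a promised but unexecuted case analysis on the extreme digits. The plan is plausible, but the key step is genuinely missing, and the sketch as written stumbles in places: for $n=999$, doubling the last digit and then halving the whole string gives $4959 > 999$, so the ``halve a long prefix'' move alone does not descend, and you would need further ad hoc local moves before your global halving idea bites.

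The paper's route is both different and cleaner. It too takes a minimal counterexample $n$, but then works \emph{digit by digit} rather than on long prefixes: every digit of $n$ must be odd (else halving that one digit already gives a smaller number); no digit can be $3$, $7$, or $9$, because the chain \eqn{EqGet3} and its cousins let you replace any such digit, via a short local excursion through two- and three-digit substrings, by the single digit $1$; hence every digit is $1$ or $5$. An internal $5$ is then ruled out because (the remaining digits being $1$, and the last digit not $5$) some substring $51$ occurs, and $51 \md 102 \md 52 \md 26$ produces a smaller number. This forces $n = 11\cdots1$, which falls to $11 \md 12 \md 6$. No global halving and no first/last-digit bookkeeping is needed; the whole argument lives inside substrings of length at most three, which is exactly what your sketch is missing.
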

\begin{proof}
(i) All numbers not ending in $0$ or $5$ are connected to $1$. If not,
let $n$ be the smallest number, not ending in $0$ or $5$, that cannot be reached from 1.
Any number $m$ that is reachable from $n$ must be larger
than $n$, or else by symmetry $m$ would be a smaller
counter-example.
This means that all digits of $n$ are odd, and cannot be $3$, $7$, or $9$ (since by 
\eqn{EqGet3} etc. they could be reduced to $1$).
If $n$ has an internal digit $5$, we can use $51 \md 102 \md 52 \md 26$ to get a smaller number.
So any $5$s must appear in a string at the end of $n$, which contradicts our assumptions.
The only remaining possibility is that $n = 11\ldots 11$. But even these numbers can be 
reduced by using  $11 \md 12 \md 6$.

(ii) Any number ending in $0$ or $5$ is connected to $5$. This follows
from (i), using $n0 \md 10 \md 5$ and
$n5 \md n\pl10 \md 10 \md 5$.

(iii) The Choix de Bruxelles operations never change a final $0$ or $5$ into any digit other
than $0$ or $5$.
\end{proof}

%%%%%%%%%%%%%%%%%%%%%%%%%%%%%%%%%%%%%%
% Section 4 How many steps?%%%%%%%%%%%%%%%%%%%%%%%%%%%%%%%%%%%%%%
\section{How many steps to reach \texorpdfstring{$n$}{n}?}\label{Sec4}

Let $\tau(n)$ be the number of steps to reach $n$ from $1$  using
the Choix de Bruxelles operation, or $-1$ if $n$ cannot be reached from $1$. 
We found the values of $\tau(n)$ for $n \le 10000$ by
computer (\seqnum{A323454}).
The initial values are shown in Table~\ref{Tab2}. 

\begin{table}[H]
\caption{ $\tau(n)=$ number of steps to reach $n$ from $1$, or $-1$ if $n$ cannot be reached. }
\label{Tab2}
$$
   \begin{array}{|r|rrrrrrrrrrrr|}
  \hline
n & 1 & 2 & 3 & 4 & 5 & 6 & 7 & 8 & 9 & 10 & 11 & 12 \\
\tau(n) & 0 & 1 & 11 & 2 & -1 & 10 & 9 & 3 & 9 & -1 & 10 & 9 \\
\hline 
n & 13 & 14 & 15 & 16 & 17 & 18 & 19 & 20 & 21 & 22 & 23 & 24 \\
\tau(n) & 5 & 8 & -1 & 4 & 7 & 8 & 8 & -1 & 10 & 9 & 6 & 8 \\
\hline 
n & 25 & 26 & 27 & 28 & 29 & 30 & 31 & 32 & 33 & 34 & 35 & 36 \\
\tau(n) & -1 & 5 & 8 & 7 & 9 & -1 & 6 & 5 & 10 & 6 & -1 & 9 \\
\hline
   \end{array}
  $$
\end{table}

We will derive  upper and lower bounds on $\tau(n)$ for large $n$.
The record high values of $\tau(n)$ that are presently known,
and the values of $n$ where they occur, are as follows (see \seqnum{A323463}):
\beql{EqHWtau}
   \begin{array}{rrrrrrrrr}
   n: & 1 & 2 & 3 & 99 & 369 & 999 & 1999 & 9879  \\
   \tau(n): & 0 & 1 & 11 & 12 & 13 & 14 & 15 & 16
   \end{array}
\eeq
In particular, the entry $\tau(n) = 12$  for  $n=99$  means that every number less than $100$ and not ending in $0$ or $5$ can
be reached from $1$ in at most $12$ steps.

We also need analogous data for the other class of numbers. The
 numbers of steps to reach $5m$ from $5$ (\seqnum{A323484})
are:

\beql{EqSteps5}
   \begin{array}{rrrrrrrrrrrrrr}
   5m: & 5 & 10 & 15 & 20 & 25 & 30 & 35 & 40 & 45 & 50 & 55 & 60 & \cdots \\
  \text{steps}:  & 0 & 1 & 11 & 2 & 11 & 12 & 8 & 3 & 10 & 12 & 9 & 11 & \cdots \\
      \end{array}
     \eeq
and the record high values  that are presently known,
and the values of $5m$ where they occur, are as follows (see \seqnum{A323464}):
\beql{EqHWSteps5}
 \begin{array}{rrrrrrrrrr}
     5m: & 5 & 10 & 15 & 30 & 100 & 200 & 400 & 9875 & 19995 \\
\text{steps}: & 0 & 1 & 11 & 12 & 13 & 14 & 15 & 16 & 17 \\
\end{array}
\eeq
In particular, the entry $13$ for $n=100$ means that every multiple of $5$ less than $100$ can
be reached from $5$ in at most $12$ steps, and so \emph{all} numbers less than $100$
can be reached from either $1$ or $5$ in at most $12$ steps.

To get an upper bound on $\tau(n)$  for larger $n$, consider a $k$-digit number $n$ given by~\eqn{Eqddd}.
We can repeatedly replace the two leading digits by a single digit (by $1$ or $5$, in fact)
at the cost of at most $12$ steps. So we can reach $1$ or $5$ in at most $12(k-1)$
steps. Since $k = \lfloor 1 + \log_{10} n \rfloor$, this takes at most $12 \log_{10} n$ steps.
In particular, $\tau(n) \le 12 \log_{10} n$.

To get a lower bound on $\tau(n)$, we note that Theorem~\ref{th3}  implies 
that at least $\log_{10} n$ steps will be needed to reach $n$ from $1$. 
We can make this more precise  by finding  $R(n)$,
the largest number that can be reached from $1$ in $n$ steps.  
The values of $R(n)$ can \emph{almost}  be found by a greedy algorithm.
Set $r(0)=1$, and, for $n>0$,
let $r(n)$ be the largest number that can be obtained  by applying Choix de Bruxelles to $r(n-1)$
(of course $R(n) \ge r(n)$).
Theorem~\ref{th1} tells us how to calculate $r(n)$, and the first $20$ values are shown in Table~\ref{Tabrrr}.
We find that  $r(n+4) = 8112 \pl r(n)$ for $n \ge 10$.

\begin{table}[htb]
\caption{ Values of $r(n)$. }
\label{Tabrrr}
$$
 \begin{array}{|c|r||c|r|}
 \hline
n & r(n) & n &  r(n) \\
\hline
0 & 1 & 10 &  88224 \\
1 & 2 & 11 &  816448 \\
2 & 4 & 12 &  8164416 \\
3 & 8 & 13 &  81644112 \\
4 & 16 & 14 &  811288224 \\
5 & 112 & 15 &  8112816448 \\
6 & 224 & 16 &  81128164416 \\
7 & 448 & 17 &  811281644112 \\
8 & 4416 & 18 &  8112811288224 \\
9 & 44112 & 19 &  81128112816448 \\
\hline
\end{array}
$$
\end{table}

\begin{theorem}\label{ThRRR}
$R(n)$, the largest number that can be reached from $1$ in $n$ steps,
is equal to $r(n)$ for $n \ne 7$, and $R(7)=512$.
\end{theorem}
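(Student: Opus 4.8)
The plan is to establish $R(n) \ge r(n)$ and $R(n) \le r(n)$ (for $n \neq 7$) separately; the lower bounds are immediate and all the work is in the matching upper bounds. The greedy strategy that always applies the $M_1$-operation to the current value is a legal sequence of moves, so $R(n) \ge r(n)$ for every $n$. For $n=7$ the greedy value $r(7)=448$ is beaten by the $7$-step path $1, 2, 4, 8, 16, 112, 56, 512$ (the last two moves halve $112$ to $56$, then double the substring ``$6$'' of $56$ \emph{in situ} to reach $512$), so $R(7) \ge 512$. It therefore remains to prove $R(n) \le r(n)$ for $n \neq 7$ and $R(7) \le 512$.

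I would prove these by strong induction on $n$. The base cases $n \le N_0$, for a fixed small $N_0$ large enough to cover the onset of the recursion $r(n+4) = 8112\pl r(n)$ and the exceptional value, are settled by the exhaustive backward search already recorded in Table~\ref{Tabrrr} and the cited OEIS entries. For $n > N_0$, assume $R(m)=r(m)$ for all $m<n$ (which, once $N_0 \ge 7$, subsumes $R(7)=512$). Any $x$ reachable from $1$ in $n$ steps is obtained from some $y$ reachable in $n-1$ steps by one operation, so $y \le r(n-1)$ and $x \le M_1(y)$. Split on the last move. If it is a halving, then $x < y \le r(n-1) < r(n)$. If it doubles a substring whose leading digit is $<5$, the digit count does not change and $x \le 2y \le 2\,r(n-1) \le r(n)$ (the inequality $2\,r(n-1)\le r(n)$ is visible from Table~\ref{Tabrrr}, with equality only at $n=10$, which lies in the base range). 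The only remaining possibility is that the last move doubles a substring whose leading digit is $\ge 5$.

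In that case Theorem~\ref{th1} gives $M_1(y) = 10y - 8s$, where $s$ is the value of the suffix of $y$ beginning at its right-most digit $\ge 5$; writing likewise $r(n) = M_1(r(n-1)) = 10\,r(n-1) - 8s_0$ (legitimate, since $r(n-1)$ begins with the digit $8$ for all $n-1$ in the inductive range and hence has a digit $\ge 5$), the assumption $x > r(n)$ yields $10\,(r(n-1)-y) < 8\,(s_0 - s)$, which \emph{simultaneously} forces $y > r(n-1) - \tfrac{4}{5}s_0$ and $s < s_0$: the ``dangerous'' predecessor $y$ must lie just below $r(n-1)$ and its right-most large digit must not be far from the end. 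The plan is to show that no such $y$ is reachable in $n-1$ steps, giving the required contradiction. I would do this by tracing $y$ backwards along \emph{forced} cheap moves: a number whose trailing digits are all odd has no value-decreasing move other than halving a leading even block, and iterating such forced reductions carries $y$ in a bounded number $j$ of steps to a number $y'$ of the shape ``short prefix $\pl$ long run of odd digits (a block of $9$'s, or ending in $5$)''. Such a $y'$ has many digits, yet its value still exceeds $r(n-1-j)$, contradicting $R(n-1-j)=r(n-1-j)$ from the induction hypothesis; equivalently $\tau(y') > n-1-j$, hence $\tau(y) > n-1$.

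The main obstacle is exactly this last step. One cannot argue by size alone, because $M_1$ is genuinely non-monotone: for instance $81599999 < r(13)$ while $M_1(81599999) = 815999918 > r(14)$, so the theorem really does hinge on $81599999$ not being reachable in $13$ steps. Making the backward-tracing argument rigorous requires controlling, in each residue class of $n \bmod 4$, how wide the window of dangerous $y$ is — in some classes the right-most digit $\ge 5$ of $r(n-1)$ is the last digit, the window contains only a handful of integers, and it can be dispatched by hand, whereas in others the window is genuinely large and the reduction above must carry the load — and bounding how many forced backward steps are available before a ``run-heavy'' number is reached. Finally, one must verify that the single place where this scheme breaks is $n=7$: there the dangerous predecessor $56$ \emph{is} reachable in $6$ steps (via $16,\,112,\,56$), which is precisely why $512 = M_1(56)$ appears, and a finite check confirms $512$ is the largest value reachable in $7$ steps, so $R(7)=512$.
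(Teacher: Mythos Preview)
Your setup is sound and your case analysis on the last move is the right decomposition: the halving case and the ``small leading digit'' doubling case go through cleanly (your inequality $2\,r(n-1)\le r(n)$ is easy once $r(n-1)$ begins with $8$), and your formula $M_1(y)=10y-8s$ correctly isolates the only dangerous situation. Your example $81599999$ is apt and shows exactly why size alone is not enough.

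The gap is in the third case, and it is not merely a matter of missing detail. Your ``backward tracing along forced cheap moves'' conflates two different things. To show that a specific $y$ in the dangerous window is \emph{not} reachable in $n-1$ steps you must rule out \emph{every} length-$(n-1)$ path from $1$ to $y$; equivalently, every length-$(n-1)$ path from $y$ to $1$. Such a path is under no obligation to begin with a value-decreasing move: the step budget allows several value-increasing moves interspersed with the decreasing ones, so the observation that ``a number with odd trailing digits has no value-decreasing move other than halving a leading even block'' does not force the first step. Moreover, your induction hypothesis only tells you the \emph{maximum} reachable at each earlier stage, not \emph{which} numbers below that maximum are reachable, so it gives you no direct leverage on whether a particular $y$ is reachable. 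You acknowledge this is the ``main obstacle,'' but it is in fact the entire content of the theorem: the easy cases are genuinely easy, and what remains is precisely showing that the dangerous predecessors do not occur.

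The paper sidesteps this by abandoning the attempt to characterise dangerous $y$ implicitly and instead tracking them \emph{explicitly}. It verifies $R(n)=r(n)$ by exhaustive search for $n\le 16$, and in doing so records, for each generation $m$, the full set $F_m$ of $m$-step-reachable numbers that lie above $r(m+1)/10$ (the only ones that could possibly generate a new record at step $m+1$, by Theorem~\ref{th3}). These frontier sets turn out to be tiny --- at most nine numbers --- and from generation $14$ onward they repeat with period four, each number acquiring an extra $8112$ prefix. Once one checks a single period (that applying one Choix de Bruxelles step to $F_m$ and discarding everything below $r(m+2)/10$ yields exactly $F_{m+1}$, for four consecutive $m$), the pattern propagates forever, and inspecting the listed candidates shows $r(n)$ is always the maximum. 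This is closer in spirit to a finite-state verification than to an induction on $n$, and it is what makes the argument close: the dangerous window is handled by enumeration rather than by structural constraints on paths.
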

\begin{proof}
By computer calculation, the result is true for $n \le 16$.
Suppose $n \ge 17$.
The candidates for $R(n)$ are all the numbers that can be
obtained by applying Choix de Bruxelles to the $(n-1)$st generation numbers---numbers that
can be reached from $1$ in $n-1$ steps. 
In view of Theorem~\ref{th3},
we can discard any  $(n-1)$st generation numbers that are less than $\frac{1}{10} r(n)$.
The sets of remaining candidates form a repeating pattern of period four, 
which, starting in generation $14$, are as follows
(here  $i=1, 2, 3, \cdots$ and $P$ denote $i$ copies of the string $8112$):
\begin{align}\label{Eqleft}
\text{Generation} ~&~ \text{Remaining candidates} \nonumber \\
10+4i ~&~  P81646, P81652, P81662, P81664, P84112, P88112, P88212, P88222, P88224 \nonumber \\
11+4i ~&~  P816442, P816444, P816448 \nonumber \\
12+4i ~&~  P8164416 \nonumber \\
13+4i  ~&~ P81128826, P81128832, P81644112 \nonumber  
\end{align}
At each new generation, the largest number is obtained by expanding the final number
in each row (using Theorem~\ref{th3}), and the resulting numbers are the $r(n)$, as claimed.
\end{proof}

In summary, we have:
 
 \begin{theorem}\label{thR}
 For $n \ge 14$,
 \beql{EqRBDS}
 8.112~10^{n-6} ~<~ R(n) ~\le~ 8.113 ~ 10^{n-6}.
 \eeq
 \end{theorem}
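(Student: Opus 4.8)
The plan is to obtain both inequalities in~\eqn{EqRBDS} directly from the structural facts already in hand: by Theorem~\ref{ThRRR} we have $R(n) = r(n)$ for every $n \ge 14$ (since $n \ne 7$), and for $n \ge 10$ the sequence $r$ satisfies the concatenation recurrence $r(n+4) = 8112 \pl r(n)$. So it suffices to read off tight upper and lower bounds for $r(n)$ from a single application of this recurrence, once the number of digits of $r(n)$ is known.

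The first step is a digit count. Since prepending the block $8112$ adds exactly four digits, the recurrence carries the digit counts of the four base values in Table~\ref{Tabrrr} forward by periodicity: $r(10), r(11), r(12), r(13)$ have $5, 6, 7, 8$ digits respectively, so by induction $r(m)$ has exactly $m-5$ digits for all $m \ge 10$; equivalently $10^{\,m-6} \le r(m) < 10^{\,m-5}$.

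Now fix $n \ge 14$, so that $n-4 \ge 10$. Using the digit count, $r(n-4)$ has $(n-4)-5 = n-9$ digits, hence
\[
   R(n) ~=~ r(n) ~=~ 8112 \pl r(n-4) ~=~ 8112\cdot 10^{\,n-9} \;+\; r(n-4),
   \qquad 1 \le r(n-4) < 10^{\,n-9}.
\]
Consequently $8112 \cdot 10^{\,n-9} < R(n) < 8112\cdot 10^{\,n-9} + 10^{\,n-9} = 8113\cdot 10^{\,n-9}$, the left inequality being strict because $r(n-4) \ge 1 > 0$. Since $8112\cdot 10^{\,n-9} = 8.112\cdot 10^{\,n-6}$ and $8113\cdot 10^{\,n-9} = 8.113\cdot 10^{\,n-6}$, this is exactly~\eqn{EqRBDS}.

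There is no serious obstacle here: the real content has already been packed into Theorem~\ref{ThRRR} and the recurrence $r(n+4) = 8112 \pl r(n)$. The one point warranting care is that this recurrence is asserted for \emph{all} $n \ge 10$, not merely for the range checked by machine; but that is precisely the period-four stabilization established in the proof of Theorem~\ref{ThRRR} (with Theorem~\ref{th1} supplying the greedy step: double the suffix beginning at the right-most digit $\ge 5$). Granting it, one can in fact sharpen~\eqn{EqRBDS}: iterating $c_{n+4} = 8.112 + c_n/10000$, where $c_n := R(n)/10^{\,n-6}$, shows $c_n \to 81120/9999 = 8.\overline{1128}$, so $R(n) = (8.1128\ldots)\,10^{\,n-6}$, the leading block of digits being $8112\,8112\,8112\ldots$.
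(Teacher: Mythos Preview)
Your argument is correct and is exactly the reasoning the paper intends: it states Theorem~\ref{thR} with the preface ``In summary, we have'' immediately after Theorem~\ref{ThRRR} and the recurrence $r(n+4)=8112\pl r(n)$, leaving the arithmetic you carry out to the reader. Your digit count and the resulting bounds $8112\cdot10^{\,n-9}<R(n)<8113\cdot10^{\,n-9}$ are precisely what is needed (and in fact give the upper bound strictly, which is slightly stronger than the stated $\le$).
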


The upper bound in Theorem~\ref{thR} states that, starting at $1$, we cannot reach a number 
 greater than $c\,10^{n-6}$ in $n$ steps, where $c=8.113$. 
 By solving $c\,10^{n-6} = m$, we see that to reach $m$ from $1$ we require at least
 $n = 6 + \log_{10} (m/c) = \log_{10} m +5.09\ldots$ steps.
 
 By combining this with our earlier result, we have our final theorem.
 
 \begin{theorem}\label{thTAU}
 The number of steps needed to reach $n$ from $1$ by the Choix de Bruxelles operation, assuming $n$ does not end in $0$ or $5$,
 is bounded by
 \beql{EqTAU}
 \log_{10} n + 5 ~<~ \tau(n) ~\le~ 12 \log_{10} n.
 \eeq
 \end{theorem}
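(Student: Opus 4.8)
The plan is to assemble Theorem~\ref{thTAU} directly from the two bounds already established in the preceding discussion, so the proof is essentially a bookkeeping exercise. First I would restate the upper bound: the argument just before Theorem~\ref{thR} shows that a $k$-digit number $n$ can be reduced to $1$ or $5$ in at most $12(k-1)$ steps by repeatedly collapsing the two leading digits, and since $k=\lfloor 1+\log_{10} n\rfloor \le 1+\log_{10} n$, this gives $\tau(n)\le 12(k-1)\le 12\log_{10} n$. (One should note that the collapsing argument lands on $1$ when $n$ does not end in $0$ or $5$, which is exactly the hypothesis; the two-digit reductions cost at most $12$ steps each by the $n=99$ and $n=100$ data in \eqn{EqHWtau} and \eqn{EqHWSteps5}, together with the fact that reductions in the interior of the number stay within their residue class.)

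Next I would derive the lower bound from Theorem~\ref{thR}. That theorem says $R(n) \le 8.113\cdot 10^{n-6}$ for $n\ge 14$, i.e. no number exceeding $8.113\cdot 10^{n-6}$ is reachable from $1$ in $n$ steps. Hence if $m$ \emph{is} reachable in $\tau(m)$ steps and $\tau(m)\ge 14$, then $m \le 8.113\cdot 10^{\tau(m)-6}$, which rearranges to $\tau(m) \ge 6 + \log_{10}(m/8.113) = \log_{10} m + 6 - \log_{10} 8.113$. Since $\log_{10} 8.113 = 0.909\ldots < 1$, this yields $\tau(m) > \log_{10} m + 5$, as claimed. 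The only loose end is the range $\tau(m) < 14$: there one checks directly (the computed table of $\tau$ values for small $n$, plus the record-value data \eqn{EqHWtau}) that $\log_{10} m + 5 < \tau(m)$ continues to hold — the smallest $n$ with $\tau(n)=13$ is $369$ and $\log_{10} 369 + 5 \approx 7.57 < 13$, and similarly for the sporadic small cases, so the inequality is comfortably satisfied and in fact there is slack to spare.

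I do not expect any genuine obstacle here, since both halves are immediate consequences of results the paper has already proved; the ``hard part'' is purely cosmetic, namely making sure the two regimes (small $n$ handled by explicit data, large $n$ by Theorem~\ref{thR}) are stitched together without a gap, and making sure the hypothesis ``$n$ does not end in $0$ or $5$'' is invoked at the one place it is needed — the terminal step of the leading-digit collapse, which must reach $1$ rather than $5$ for $\tau(n)$ (as opposed to the analogous quantity for the other component) to be finite and bounded by $12\log_{10}n$. A clean write-up would therefore be: (1) quote the upper-bound paragraph; (2) quote Theorem~\ref{thR} and do the one-line rearrangement for $\tau(n)\ge 14$; (3) dispatch $\tau(n)\le 13$ by inspection; (4) conclude.
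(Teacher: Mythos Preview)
Your approach mirrors the paper's exactly: the paper's ``proof'' is the single sentence ``By combining this with our earlier result, we have our final theorem,'' and the earlier results are precisely the $12(k-1)$ collapsing argument and the rearrangement of $R(n)\le 8.113\cdot 10^{n-6}$ that you invoke.

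However, your step~(3) contains a genuine error. You claim that for $\tau(m)<14$ one verifies $\log_{10} m + 5 < \tau(m)$ by inspection, illustrating with $\tau=13$. But the inequality is \emph{false} for small $\tau$: for instance $\tau(2)=1$ while $\log_{10}2+5\approx 5.30$, and $\tau(112)=5$ while $\log_{10}112+5\approx 7.05$. Indeed, for each $k\le 10$ the number $r(k)$ from Table~\ref{Tabrrr} has $\tau(r(k))=k$ yet $\log_{10} r(k)+5>k$. (Your check of ``the smallest $n$ with $\tau(n)=13$'' is also the wrong direction: one needs the \emph{largest} such $n$, namely $R(13)$.) The upper bound likewise fails for small $n$: $\tau(3)=11$ but $12\log_{10}3\approx 5.72$, because the collapsing argument gives $12(k-1)=0$ when $k=1$, which says nothing about single-digit $n\ne 1$.

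So the theorem as stated is simply false for small $n$, and no inspection will rescue step~(3). The paper glosses over this entirely; by being more careful you have exposed a defect in the \emph{statement} rather than in your method. A correct version needs an explicit ``for $n$ sufficiently large'' hypothesis (or adjusted constants), which neither you nor the paper supplies.
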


%\section{Acknowledgments}

%%%%%%%%%%%%%%%%%%%%%%%%%%%%%%%%%%%%%%%%%%%%

\bigskip
\hrule
\bigskip

Concerned with sequences
\seqnum{A323288},
\seqnum{A323453},
\seqnum{A323454},
\seqnum{A323460},
\seqnum{A323462},
\seqnum{A323463},
\seqnum{A323464},
\seqnum{A323465},
\seqnum{A323484}.

\bigskip
\hrule
\bigskip

\noindent 2010 Mathematics Subject Classification 11A63, 11B37

\end{document}